\documentclass[11pt, a4paper]{article}

\usepackage{amsmath, amsthm, amsfonts, amssymb}

\usepackage{setspace}
\usepackage{fullpage}
\usepackage{enumitem}

\usepackage{floatpag}
\usepackage[dvipsnames]{xcolor}

\usepackage{array}
\usepackage{float}
\usepackage{caption}
\usepackage{subcaption} % for subfigures\frac{1}{3}

%\usepackage{tikz} % tikz graphics
%\usetikzlibrary{calc} %calculating distances etc.
%\usetikzlibrary{arrows, patterns,snakes} %do nice things in tikz, like underbraces

\usepackage{hyperref}
\hypersetup{colorlinks=true,
	citecolor=blue,
	filecolor=blue,
	linkcolor=blue,
	urlcolor=blue
}
\usepackage{cleveref}
\usepackage{cite}

\usepackage{comment}

\newtheorem{theorem}{Theorem} 
\newtheorem{corollary}{Corollary} 
\newtheorem{definition}{Definition}

\newtheorem{question}{Question}

\newtheorem{proposition}{Proposition}
\newtheorem{conjecture}{Conjecture}

\setlength\parindent{3pt}

\begin{document}
	
	\title{A note on multicolour  Erd\H{o}s-Hajnal conjecture}
	\date{\vspace{-5ex}}
	\author{ Maria Axenovich
		\thanks{ Corresponding author;
			Karlsruhe Institute of Technology, Karlsruhe, Germany; 
			email:
			\mbox{\texttt{maria.aksenovich@kit.edu}}.
		}     
\and
		Lea Weber
		\thanks{
			Karlsruhe Institute of Technology, Karlsruhe, Germany;
			email: \mbox{\texttt{0815lea@gmx.net}}.
	}}

	\maketitle

\abstract{Informally, the Erd\H{o}s-Hajnal conjecture  (shortly EH-conjecture) asserts that if a sufficiently large host clique on $n$ vertices is edge-coloured avoiding a copy of some fixed edge-coloured clique, then there is a large homogeneous set of size $n^\beta$ for some positive $\beta$, where a set of vertices is homogeneous if it does not induce all the colours.  This conjecture, if true, claims that imposing local conditions on edge-partitions of cliques results in a global structural consequence such as a large homogeneous set, a set  avoiding all edges of some part.  While this conjecture attracted a lot of attention, it is still open even for two colours.  \\

In this note, we reduce the multicolour EH-conjecture to the case when the number of colours used in a host clique is either the same as in the forbidden pattern or one more.  We exhibit a non-monotonicity behaviour of homogeneous sets in coloured cliques with forbidden patterns by showing that allowing an extra colour in the host graph could actually decrease the size of a largest homogeneous set. \\

{\bf Keywords:} Erd\H{o}s-Hajnal conjecture, multicolor, homogeneous sets, forbidden color patterns
}

\section{Introduction}
We shall be considering edge-coloured cliques.  Here,  a {\it clique} of size $n$   is a complete graph on $n$ vertices denoted $K_n$.   A {\it  co-clique} of size $n$  is a graph on $n$ vertices with no edges.  For a graph $H$, we denote by  $\alpha(H)$ and $\omega(H)$ the sizes of a largest induced co-clique and clique in $H$, respectively.
An \emph{$s$-edge-colouring} $c$ of the  clique $K_n$ on vertex set $[n]$ is a map $c: \binom{[n]}2 \to [s]$. We denote by $|c|$ the number of colours from $[s]$ for which $c^{-1}$ is not empty.
Note that an $s$-edge-colouring $c$ of $K_n$ can be seen as an edge-partition of $G$ into $s$ colour classes, i.e.\ $K_n = G_1 \cup \cdots  \cup G_s$, where $G_i$ corresponds to a maximal subgraph of $K_n$ whose edges are assigned colour $i$ under $c$. Here $G_i$ can be an empty graph if $|c| < s$.  For an $s$-edge-colouring $c$ of $K_n$  on vertex set $V$ and an $s'$-edge-colouring $c'$ of $K_k$, we  define  a {\it copy } of $c'$ in $c$ to be a clique  on a set $U \subseteq V$ of size $k$, such that $c$ restricted to this clique is isomorphic to $c'$, i.e., there is a bijection $\phi: U\rightarrow [k]$ so that for any two vertices $x, y\in U$, $c(xy) = c'(\phi(x)\phi(y))$.  We say that $c$ is $c'$-free if there is no copy of $c'$ in $c$.   Typically we assume that $k$ is fixed and $n$ is large, i.e.\ the $c'$-free property is a local condition on the colouring. One can think of the colouring $c'$ as a forbidden colour pattern. One of the key questions considered is how the local restrictions impact global properties, in particular how large the homogeneous number must be.\\

A \emph{homogeneous set} in an $s$-edge-colouring $c$ of $K_n$ is a set $X \subseteq [n]$ that has a colour ``missing'', i.e.\ $|\{c(xy):   x,y \in X\}|< s$. The size of a largest homogeneous set in $c$ is denoted by $h_{s}(c)$  or if the number of colours is clear from the context, simply $h(c)$. Note that any homogeneous set is an independent set in some colour class $G_i$, $i \in [s]$. Thus, we have $h(c) = \max\{\alpha(G_i): {i \in [s]} \}$. Note that  for $s=2$ one colour of $c$ corresponds to the edges of some $n$-vertex graph $G$ and the other colour corresponds to the edges of  the complement of $G$. Then in particular, we have $h(c) = h(G) = \max \{\alpha(G), \omega(G)\}$, which coincides with the definition of a homogeneous set in graphs. For $s'\leq s$, an $s'$-edge-colouring $c'$ of $K_k$, $k \le n$, we define 
$$h(n, c') = h_{s}(n, c')=\min\{h_s(c)\ |\ \text{$c$ is a $c'$-free $s$-edge-colouring of $K_n$}\}.$$

\begin{definition}
	Let $c'$ be an $s'$-edge-colouring of $K_k$ and let $s \ge s'$. If there are positive constants $\epsilon = \epsilon(c', s)$ and $C$, such that $h_s(n, c')\geq Cn^\epsilon$,
	we say that $c'$ has the EH-property for $s$ colours. 
	%We call $\epsilon(c', s)$ the EH-exponent for $c'$ and $s$ colours if it exists.
\end{definition}

For example, when $c'$ is an edge-colouring of $K_3$,  i.e. a triangle,  with two edges of  colour $1$ and one edge of colour $2$, one can show that in any $c'$-free edge-colouring of a clique on $n$ vertices using colours $1, 2, $ and $3$ there is a set of vertices of size $n^{1/2}$ inducing edges only of two of these three colours, see for example Axenovich, Snyder, Weber~\cite{ASW}.  This shows that $c'$ has the EH-property for $3$ colours.

\begin{conjecture}[Erd\H{o}s,  Hajnal~\cite{EH89}] \label{conj:EH}
	Let $k, s'$ be integers with $k, s' \ge2$. Then for any $s\ge s'$, any edge-colouring $c'$ of $K_k$ with $|c'| = s'$ has the EH-property for $s$ colours.
\end{conjecture}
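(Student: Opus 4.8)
The plan is to split Conjecture~\ref{conj:EH} into a soft \emph{colour-reduction} that handles all large $s$ and a hard \emph{base-case} analysis for the two smallest values $s\in\{s',s'+1\}$. For the reduction, start from a $c'$-free $s$-edge-colouring $c$ of $K_n$ with $s\ge s'+1$ and collapse every host colour exceeding $s'$ into a single new colour, obtaining an $(s'+1)$-edge-colouring $\hat c$. Since $|c'|=s'$ forces the palette of $c'$ to be exactly $[s']$, any copy of $c'$ in $\hat c$ must avoid the collapsed colour and therefore uses only colours in $[s']$, on which $\hat c$ agrees with $c$; such a copy would already appear in $c$, so $\hat c$ is again $c'$-free. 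Conversely, a homogeneous set of $\hat c$ is homogeneous in $c$: if it misses a colour $j\le s'$ the same is true in $c$, and if it misses the collapsed colour then in $c$ it avoids every colour in $\{s'+1,\dots,s\}$. Hence $h_s(c)\ge h_{s'+1}(\hat c)$, and the EH-property for $s'+1$ colours yields it for all $s\ge s'+1$ with the \emph{same} exponent. This reduces the whole conjecture to proving the property for $s=s'$ and $s=s'+1$.

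For the base cases I would reach for structural machinery rather than counting. A colour-substitution lemma in the spirit of Alon--Pach--Solymosi --- asserting that a pattern assembled by substituting smaller EH-patterns into one another again has the EH-property --- should let one peel $c'$ down to indecomposable \emph{prime} patterns, for which a regularity- or VC-dimension-based argument would extract a reduced colouring on a bounded number of large clusters, locate homogeneous structure there, and lift it back with only polynomial loss. One is tempted to derive the $s=s'+1$ case from the $s=s'$ case by merging two pattern colours, but here the non-monotonicity phenomenon established in this note intervenes decisively: adding a colour to the host can strictly \emph{decrease} the largest homogeneous set, so no such transfer is available, and the two base cases must be attacked independently.

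The genuine obstacle is precisely this base-case stage, and I do not expect to clear it. The case $s=s'=2$ is verbatim the classical graph Erd\H{o}s--Hajnal conjecture of~\cite{EH89}, open for over three decades, where the best unconditional guarantee is only $h_2(n,c')\ge 2^{\Omega(\sqrt{\log n})}$ --- subpolynomial, and hence short of the $n^{\epsilon}$ demanded by the EH-property. Every substitution or regularity approach ultimately bottoms out at such prime, two-colour instances, which remain unresolved. Consequently a complete proof of Conjecture~\ref{conj:EH} is out of reach by present methods; the realistic and fully rigorous contribution is the reduction above, which shows that the multicolour conjecture for every $s\ge s'$ follows once its two extremal cases $s\in\{s',s'+1\}$ are known, and which I would therefore establish in full while identifying those cases as the locus of all remaining difficulty.
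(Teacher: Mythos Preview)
The statement is a \emph{conjecture}; the paper does not prove it and explicitly treats it as open. You correctly identify this and confine your rigorous contribution to the reduction step, which is exactly what the paper itself establishes in Theorem~\ref{reduction} and Corollary~\ref{cor:s_or_s+1}: the multicolour conjecture for all $s\ge s'$ collapses to the two base cases $s\in\{s',s'+1\}$.

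Your colour-collapse argument is correct and is a one-shot version of the first (easy) half of the paper's proof of Theorem~\ref{reduction}. The paper merges one surplus colour at a time into a colour $a\in[s]$ not used by $c'$, obtaining $h_{s+1}(n,c')\ge h_s(n,c')$ for each $s>s'$ and then iterating; you merge all colours in $\{s'+1,\dots,s\}$ simultaneously to get $h_s(n,c')\ge h_{s'+1}(n,c')$ directly. The two arguments are equivalent and both rely on the key fact that a copy of $c'$ uses only colours in $[s']$, so the merged colouring remains $c'$-free. What the paper proves in addition, and what your proposal does not cover, is the reverse quantitative bound $h_{s+1}(n,c')\le h_s(n,c')^{1+\xi}$ via a random recolouring; this is not needed for the reduction to base cases, but it is required for the full ``if and only if'' formulation of Theorem~\ref{reduction}.

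Your reading of the base cases also agrees with the paper: $s=s'=2$ is the classical Erd\H{o}s--Hajnal problem, and Proposition~\ref{prop:monotonicity} shows that $h_{s'+1}$ need not dominate $h_{s'}$, so the case $s=s'+1$ cannot be deduced from $s=s'$ --- precisely the non-monotonicity you invoke. In short, your proposal is correct as far as it goes and coincides with the paper's actual results; the conjecture itself remains unproved in both.
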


Even in the case of two colours, i.e., $s'=s=2$,  the conjecture remains open, see for example  a survey by Chudnovsky~\cite{chudnovsky14}, as well as \cite{APS, BLT, FPS, NSS}, to name a few.   When $F$ is a fixed graph and $G$ is any $F$-free $n$-vertex graph, Erd\H{o}s and Hajnal proved that $h(G) \ge 2^{c\sqrt{\log n}}$.  One could extend the arguments in~\cite{EH89} from two colours to $s$ colours to show that 
in the above setting $h_s(n, c') = \Omega(2^{\sqrt{ \log n}})$.  In particular, for any $s$ and any forbidden colouring $c'$, we have for any $y$ and any sufficiently large $n$, that 
\begin{equation}\label{general-lb}
h_s(n, c') \geq \log^y n.
\end{equation}
The bound for two colours was recently improved  to $h(G) \ge 2^{c\sqrt{\log n\log\log n}}$ by   Buci\'{c}, Nguyen, Scott, and Seymour~\cite{BNSS}.  \\

Here, we consider Conjecture \ref{conj:EH} in general  and note that $c'$ might not use all colours in $[s]$ and it is not immediately obvious whether a larger number of colours in the edge-colouring of the host clique forces larger homogeneous sets. We show that we  can reduce the problem to the case when the number of colours in the edge-colouring of a large clique is the same or one more than the number of colours in the forbidden pattern $c'$. 

\begin{theorem}\label{reduction}
	Let $c$ be an edge-colouring of a clique using exactly $s'$ colours and  let $s$ be an integer with $s > s'$. 
	For any positive $n$,   $h_{s+1}(n, c)\geq h_s(n,c)$ and  for any positive $\xi$,  for sufficiently large $n$,  
	$h_{s+1}(n, c) \leq h_s^{1+\xi}(n, c)$.
	In particular,  $c$ has the EH-property for $s$ colours if and only if $c$ has the EH-property for $s+1$ colours.
\end{theorem}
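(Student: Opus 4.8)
The plan is to prove the two inequalities in the statement separately and then read off the equivalence of the EH-properties. Throughout I assume, legitimately (permuting colour labels preserves $c$-freeness and all homogeneity numbers), that $c$ uses exactly the colours $1,\dots,s'$, and I record that $h_s(n,c),h_{s+1}(n,c)\le n$ for every $n$, since $s'\ge 2$ makes the monochromatic colouring $c$-free.

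To prove $h_{s+1}(n,c)\ge h_s(n,c)$, I would take an arbitrary $c$-free $(s+1)$-edge-colouring $\gamma$ of $K_n$ and let $\delta$ be the $s$-edge-colouring obtained from $\gamma$ by recolouring every edge of colour $s+1$ with colour $s$. Since $s>s'$, neither colour $s$ nor colour $s+1$ occurs in $c$, so the edges of any would-be copy of $c$ in $\delta$ all receive colours from $\{1,\dots,s'\}\subseteq\{1,\dots,s-1\}$; such a copy uses no recoloured edge and is therefore already a copy of $c$ in $\gamma$ — impossible. Hence $\delta$ is $c$-free and $h_s(\delta)\ge h_s(n,c)$. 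Moreover, every homogeneous set of $\delta$ is homogeneous in $\gamma$: a set missing a colour $j\le s-1$ in $\delta$ misses it in $\gamma$ too, and a set missing colour $s$ in $\delta$ induces in $\gamma$ neither colour $s$ nor colour $s+1$. Thus $h_{s+1}(\gamma)\ge h_s(\delta)\ge h_s(n,c)$, and minimising over $\gamma$ yields the inequality.

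For $h_{s+1}(n,c)\le h_s^{1+\xi}(n,c)$ with $\xi>0$ fixed and $n$ large, write $h:=h_s(n,c)$ and fix a $c$-free $s$-edge-colouring $\delta$ of $K_n$, with colour classes $G_1,\dots,G_s$, attaining $h_s(\delta)=h$. If $h\ge n^{1/(1+\xi)}$ then $h_{s+1}(n,c)\le n\le h^{1+\xi}$, so I assume $h<n^{1/(1+\xi)}$; then $|\delta|=s$ (otherwise some $G_i$ is empty and $h=n$), and since $\alpha(G_s)\le h$ a Tur\'an-type estimate gives $e(G_s[X])\ge\frac{|X|}{2}\!\left(\frac{|X|}{h}-1\right)$ for every vertex set $X$, as well as $e(G_s)\ge\frac n2\!\left(\frac nh-1\right)\to\infty$. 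Now I form $\gamma$ by recolouring each edge of $G_s$, independently and uniformly at random, with colour $s$ or colour $s+1$. Exactly as before $\gamma$ is $c$-free, because any copy of $c$ in $\gamma$ avoids the recoloured edges and hence already occurs in $\delta$. The colour classes of $\gamma$ are $G_1,\dots,G_{s-1}$ together with the two random parts $G_s^{(s)},G_s^{(s+1)}$ into which the edges of $G_s$ have been split, so $h_{s+1}(\gamma)=\max\!\left(\alpha(G_1),\dots,\alpha(G_{s-1}),\alpha(G_s^{(s)}),\alpha(G_s^{(s+1)})\right)$; the first $s-1$ terms are at most $h$, and a routine first-moment computation based on the Tur\'an estimate shows that with probability tending to $1$ both parts are non-empty and have independence number at most $t_0:=\lceil h(2\log_2 n+3)\rceil$. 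Fixing such a $\gamma$ gives $h_{s+1}(n,c)\le h_{s+1}(\gamma)\le t_0\le 3h\log_2 n$. Finally, choosing $y=\lceil 2/\xi\rceil$ in~\eqref{general-lb} gives $h\ge\log^y n$ for $n$ large, so $h^\xi\ge\log^2 n\ge 3\log_2 n$ and hence $3h\log_2 n\le h\cdot h^\xi=h^{1+\xi}$, which is the desired bound.

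The equivalence of the EH-properties then follows at once: if $h_s(n,c)\ge Cn^\epsilon$ then $h_{s+1}(n,c)\ge h_s(n,c)\ge Cn^\epsilon$ by the first inequality; conversely, if $h_{s+1}(n,c)\ge Cn^\epsilon$, the second inequality with $\xi=1$ gives $h_s(n,c)\ge h_{s+1}(n,c)^{1/2}\ge C^{1/2}n^{\epsilon/2}$ for $n$ large, which extends to all $n$ after shrinking the constant. I expect the upper bound to be the main obstacle: one must locate the spare colour $s$ of an optimal colouring (which forces the case distinction on whether $h$ is comparable to $n$), control both random parts of its necessarily dense colour class via a Tur\'an/first-moment argument, and — the key point — invoke the general lower bound~\eqref{general-lb} to guarantee that $h$ is already large enough to absorb the unavoidable $\Theta(\log n)$ overhead into the factor $h^\xi$.
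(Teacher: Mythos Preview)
Your proof is correct and follows essentially the same strategy as the paper: for the first inequality merge the two spare colours, and for the second introduce colour $s+1$ by random recolouring, control the new colour classes via Tur\'an's bound plus a first-moment/union-bound computation, and absorb the unavoidable $\log n$ factor into $h^{\xi}$ using~\eqref{general-lb}. The only (cosmetic) difference is that the paper recolours \emph{every} edge to $s+1$ with probability $\tfrac12$ and then argues that each of the $s$ original colours still appears on every set of size $h^{1+\xi}$, whereas you split only the spare class $G_s$, so colours $1,\dots,s-1$ are automatically preserved and only the two random halves of $G_s$ need to be analysed; the paper also phrases the argument contrapositively (start from $h_{s+1}$, bound $h_s$ from below) while you argue directly from an optimal $s$-colouring, but these are equivalent.
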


\begin{corollary}\label{cor:s_or_s+1}
	Let $c'$ be an $s'$-edge-colouring of a clique. Then the EH-conjecture holds for $c'$ if and only if $c'$ has the EH-property for $s'$ and $s'+1$ colours.
\end{corollary}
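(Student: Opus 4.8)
The plan is to derive \Cref{cor:s_or_s+1} directly from \Cref{reduction} by a straightforward induction on the number of colours in the host clique, so essentially no new combinatorial work is needed beyond invoking the theorem repeatedly. Recall that $c'$ is a fixed $s'$-edge-colouring of a clique, and that by definition $c'$ satisfies the EH-conjecture precisely when it has the EH-property for $s$ colours for \emph{every} $s \ge s'$. The corollary asserts that it suffices to verify this property for the two smallest admissible values, namely $s = s'$ and $s = s'+1$.

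The forward direction is trivial: if $c'$ satisfies the EH-conjecture then by definition it has the EH-property for all $s \ge s'$, and in particular for $s' $ and $s'+1$. The substance is the reverse direction, so suppose $c'$ has the EH-property for both $s'$ and $s'+1$ colours; I must show it has the EH-property for every $s \ge s'$. First I would dispose of the base cases $s = s'$ and $s = s'+1$, which hold by hypothesis. Then I would argue by induction on $s$: assume $s \ge s'+1$ and that $c'$ has the EH-property for $s$ colours, and deduce it for $s+1$ colours. Since $s \ge s'+1 > s'$, the hypotheses of \Cref{reduction} are met with $c = c'$, so the theorem's final clause applies directly, giving that $c'$ has the EH-property for $s$ colours if and only if it has the EH-property for $s+1$ colours. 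The inductive step follows immediately, completing the induction.

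The only point that needs a moment of care is that \Cref{reduction} requires the \emph{strict} inequality $s > s'$ (equivalently $s \ge s'+1$), which is why the induction cannot be started at $s = s'$ using the theorem alone: the equivalence between $s'$ and $s'+1$ colours is not furnished by \Cref{reduction} and must instead be supplied as a separate hypothesis of the corollary. This is exactly why the corollary singles out both $s'$ and $s'+1$ rather than just $s'$. I do not anticipate a genuine obstacle here; the main thing to get right is to invoke the theorem only for indices $s \ge s'+1$, and to treat the pair $\{s', s'+1\}$ as the given base of the induction.
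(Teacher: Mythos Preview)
Your argument is correct and is exactly the derivation the paper has in mind: the corollary is stated without a separate proof because it follows from \Cref{reduction} by precisely the induction on $s$ (starting at $s'+1$) that you describe. Your observation that the strict inequality $s>s'$ in \Cref{reduction} forces both $s'$ and $s'+1$ to appear as base cases is the key point and matches the paper's formulation.
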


Note that Theorem \ref{reduction} does not hold if $s=s'$.  Indeed, as we shall show in the next proposition,  there is a colouring $c$ of a clique on $4$ vertices  in $2$ colours, such that $h_3(n, c) = o(h_2(n,c))$.  Moreover,  there is a colouring $c'$ of a clique on three vertices in three colours, such that $h_4(n, c') \neq \omega(h_3(n, c'))$.  
Here, an edge colouring of a graph is {\it rainbow} if it assigns distinct colours to distinct edges.   

\begin{proposition}\label{prop:monotonicity}
Let $c'$ be a rainbow colouring of $K_3$ with colours $1,2$, and $3$. Let $c$ be an edge-colouring of $K_4$ with colours $1$ and $2$ in which each  class forms a path on three edges.  Then 
$$ h_3(n, c') = \Theta\left(n^{1/3}\log^2 n\right) {\rm~~ and ~} ~ h_4(n,c') = O(n^{1/3} \log^2n),$$
$$h_2(n, c) = n^{1/2} ~{ ~\rm and ~} ~ h_3(c) = O(n^{1/3} \log^{7/3} n).$$
\end{proposition}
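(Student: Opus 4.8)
The proof splits into four estimates. The plan is to handle the rainbow‑$K_3$ cases first, since the extremal colourings there are the natural ``product'' or ``iterated blow‑up'' constructions, and then to treat the two‑path colouring of $K_4$ using a related but more delicate iterated construction.

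\medskip
\noindent\textbf{Step 1: $h_3(n,c')=\Theta(n^{1/3}\log^2 n)$ for $c'$ rainbow $K_3$.}
For the upper bound I would build a $3$-edge-colouring of $K_n$ with no rainbow triangle and no large homogeneous set. A clean way is to take the canonical Gallai‑type product: partition $[n]$ into three parts, colour all edges between part $i$ and part $i+1$ (indices mod $3$) with a single colour (this kills rainbow triangles across parts, since any cross triangle sees at most two colours), and recurse inside each part using an optimal colouring on $\sim n/3$ vertices. Each colour class is then a blow‑up of a bipartite‑type graph, whose independence number I would track through the recursion; the recursion $f(n)\le 3^{1/3} f(n/3)\cdot(\text{polylog factor})$ should yield the $n^{1/3}$ exponent, and the $\log^2 n$ factor comes from iterating the two‑colour Erd\H{o}s–Hajnal‑type bound inside the two colours that still behave freely. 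For the matching lower bound $h_3(n,c')=\Omega(n^{1/3}\log^2 n)$ I would argue that in any rainbow‑triangle‑free colouring one colour class, say $G_1$, is dense enough that either $\alpha(G_1)$ is large, or $G_1$ has a large clique $Q$; inside $Q$ only two colours appear (else a rainbow triangle), so one applies the improved two‑colour bound $h(G)\ge 2^{c\sqrt{\log n\log\log n}}$—wait, more carefully, one needs the polynomial‑times‑polylog statement, which for this particular two‑colour instance (a specific forbidden graph on $3$ vertices) is known to be $\Omega(\sqrt{n}\,\log n)$ type; combining with the cube‑root loss from the Gallai partition gives the claimed bound.

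\medskip
\noindent\textbf{Step 2: $h_4(n,c')=O(n^{1/3}\log^2 n)$ for $c'$ rainbow $K_3$.}
This is the non‑monotonicity highlight. The key point is that with a fourth colour available one can still forbid a rainbow triangle while keeping homogeneous sets small. I would take the optimal $3$-colour construction from Step 1 and use the extra colour $4$ only to \emph{break up} the structure that a $4$‑colour argument might otherwise exploit—concretely, recolour a carefully chosen sparse bipartite‑like subgraph of one colour class with colour $4$, chosen so that (a) no triangle becomes rainbow (i.e. colour $4$ never meets two other colours on a triangle — so colour‑$4$ edges should form a ``cross'' pattern compatible with the Gallai partition) and (b) the new colour class $G_4$ and the modified $G_1$ both still have independence number $O(n^{1/3}\log^2 n)$. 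Verifying (a) is the combinatorial heart: I expect to place colour $4$ between two of the three Gallai parts in an alternating fashion. Then $h_4\le h_3$ on this colouring, and since the Step‑1 construction already had $h_3=O(n^{1/3}\log^2 n)$, the bound follows. \emph{This is the step I expect to be the main obstacle}, because one must simultaneously preserve rainbow‑triangle‑freeness, keep all four independence numbers small, and do so with a construction transparent enough to analyse.

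\medskip
\noindent\textbf{Step 3: $h_2(n,c)=n^{1/2}$ for $c$ the two‑path colouring of $K_4$.}
Here $c$ is a $2$-colouring of $K_4$ in which each colour class is a path $P_4$ (three edges). In graph terms, forbidding $c$ means forbidding an induced subgraph on $4$ vertices that is a $3$-edge path whose complement in $K_4$ is also a $3$-edge path — i.e.\ $c$ corresponds to forbidding $P_4$ as an induced subgraph (the self‑complementary $P_4$). For $P_4$-free graphs (cographs) it is classical that every such graph on $n$ vertices satisfies $\max\{\alpha(G),\omega(G)\}\ge\sqrt n$, with the complete‑multipartite / disjoint‑union recursive structure giving equality (take $G$ to be the recursive $\sqrt n$-blow‑up). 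So $h_2(n,c)=\lceil\sqrt n\rceil$; I would state this as exactly $n^{1/2}$ matching the paper's normalisation, citing the cograph structure theorem for the upper construction and the easy perfect‑graph (or direct induction) argument for the lower bound.

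\medskip
\noindent\textbf{Step 4: $h_3(c)=O(n^{1/3}\log^{7/3} n)$ for the same $c$.}
Now a third colour is allowed but the forbidden pattern $c$ still uses only colours $1,2$. The construction: iterate a three‑way split as in Step 1, but at each level use colour $3$ on the cross‑edges of a Gallai‑type tripartition and inside each part build a cograph‑like structure in colours $1,2$ of size $\sim n/3$ that is $c$-free. The point of the third colour is to let us take \emph{disjoint unions as well as joins of more than two pieces} without creating an induced copy of $c$ in colours $\{1,2\}$, which is exactly what limits the plain cograph construction to $\sqrt n$. Tracking independence numbers through the recursion $g(n)\le 3^{1/3}g(n/3)\cdot\mathrm{polylog}$ gives exponent $1/3$; the $\log^{7/3}$ factor (as opposed to $\log^2$) arises because the innermost two‑colour instance here is the $P_4$-free case, for which the relevant iterated Erd\H{o}s–Hajnal‑type bound carries a slightly worse polylog, and one extra $\log^{1/3}$ is lost per level of the triple recursion. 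I would not grind the exact exponent $7/3$ — it falls out of the recursion once the per‑level polylog loss is pinned down — but I would flag that getting the constant in the exponent of the logarithm exactly right is the one genuinely fiddly computation, everything else being structural.
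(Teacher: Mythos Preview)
Your Steps~1 and~3 are fine and essentially match the paper (Step~1 is simply cited from \cite{FGP15}; Step~3 is the cograph/perfect-graph argument). Steps~2 and~4, however, both miss the actual constructions and contain genuine gaps.

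\textbf{Step 2.} The paper does \emph{not} start from the Step-1 Gallai colouring and sprinkle in colour~$4$. Instead it takes, for each $i\in\{1,2,3\}$, a \emph{random} $3$-colouring $c_i$ of $K_{n^{1/3}}$ using colours $[4]\setminus\{i\}$, and forms the lexicographic product $c_4=c_1\times c_2\times c_3$. Each $c_i$ omits colour~$i$, hence trivially contains no rainbow $\{1,2,3\}$-triangle; the product preserves this. The homogeneous number is then read off from the product formula $S_I^{c_4}=S_I^{c_1}S_I^{c_2}S_I^{c_3}$, giving $O(n^{1/3}\log^2 n)$ for every $3$-subset $I\subseteq[4]$. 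Your approach has a real problem: for $h_4$ to be small the colour-$4$ class $G_4$ must itself have independence number $O(n^{1/3}\log^2 n)$, which forces $G_4$ to be dense, not ``sparse''; and your sentence ``then $h_4\le h_3$ on this colouring'' is backwards --- adding a new colour class can only \emph{raise} the maximum independence number over colour classes unless that new class is controlled, which is precisely the difficulty you have not addressed.

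\textbf{Step 4.} The paper's construction is entirely different from your recursion and rests on the Mattheus--Verstra\"ete bound $R(4,t)=\Omega(t^3/\log^4 t)$. Take a graph $H$ on $n$ vertices with $\omega(H)<4$ and $\alpha(H)=O(n^{1/3}\log^{4/3}n)$; colour the non-edges of $H$ with colour~$3$ and colour each edge of $H$ with colour~$1$ or~$2$ uniformly at random. Since $H$ has no $K_4$, every $4$-clique in $K_n$ carries a colour-$3$ edge, so no copy of $c$ appears. A Tur\'an-plus-union-bound argument then shows that every set of size $O(\alpha(H)\log n)$ sees all three colours, giving $h_3=O(n^{1/3}\log^{4/3}n\cdot\log n)=O(n^{1/3}\log^{7/3}n)$; the exponent $7/3=4/3+1$ is exactly this, not a recursion artefact. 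Your Gallai-type recursion does not give exponent $1/3$: if all cross-edges among the three parts receive colour~$3$, then a set avoiding colour~$1$ may draw from \emph{all three} parts, so the relevant recursion is $g(n)\ge 3\,g(n/3)$ for colours~$1$ and~$2$, which solves to $g(n)=\Omega(n)$, not $n^{1/3}$. The missing idea is precisely the $K_4$-free Ramsey graph, which is what lets colour~$3$ block every copy of $c$ while keeping $\alpha$ near $n^{1/3}$.
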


\vskip 0.5cm

This paper is structured as follows.  We prove \Cref{reduction}, \Cref{cor:s_or_s+1}, and Proposition \ref{prop:monotonicity} in Section~\ref{sec:multi-proofs}.  We state some concluding remarks and open problems in Section~\ref{sec:conclusion}. We omit floors and ceilings when clear from the context.

%%%%%%%%%%%%%%%%%%%%%%%%%%%%%%%%%%%%%%%%%%%%%%
%%%%%%%%%%%%%%%%%%%%%%%%%%%%%%%%%%%%%%%%%%%%%%
\section{Proofs of the main results}\label{sec:multi-proofs}
%%%%%%%%%%%%%%%%%%%%%%%%%%%%%%%%%%%%%%%%%%%%%%
%%%%%%%%%%%%%%%%%%%%%%%%%%%%%%%%%%%%%%%%%%%%%%

%%%%%%%%%%%%%%%%%%%%%%%%%%%%%%%%%%%%%%%%%%%%%%
\begin{proof}[Proof of Theorem \ref{reduction}]
%%%%%%%%%%%%%%%%%%%%%%%%%%%%%%%%%%%%%%%%%%%%%%
	Let $c$ be an edge-colouring of a clique, $s$ be an integer with $s > |c|$, and  $n$ be a sufficiently large integer.\\

		Let $c''$ be a $c$-free $(s+1)$-edge-colouring of $K_n$.   Since $s > |c|$, there exists a colour $a \in [s]$ which is not used in $c$. The  colour $s+1$ is also not used in $c$. Now recolour all edges of colour $s+1$ in $c''$ with colour $a$ and call the resulting colouring $c'''$. Then $c'''$ is an $s$-edge-colouring of $K_n$ which is $c$-free, since the edges having colours from $c$ are the same in $c''$ and $c'''$. Thus, there is a homogeneous set $X$ in $K_n$ under $c'''$ of size at least $h_s(n, c)$. In particular, $X$ avoids some colour $a' \in [s]$ under $c'''$.
		If $a'\neq a$, then $X$ also avoids $a'$ under $c''$. If $a = a'$, then $X$ avoids $a$ and $s+1$ under $c''$. Thus, in any case, $X$ is a homogeneous set under $c''$, so 
		$h_{s+1}(n, c)  \geq |X|\geq h_s(n,c)$.\\

 Let $0< \xi <1$ and  $n$ be sufficiently large.  Assume that $h_{s+1}(n,c) =h^{1+\xi}$, for some $h$. Note that from (\ref{general-lb}) we have  $h>\log^{2/\xi}n$.  We shall show that $h_s(n, c) \geq h$.
	
	Let $c''$ be a $c$-free $s$-colouring of $K_n$ on vertex set $V$.  We want to show that $h(c'') \ge h$. We shall construct an $(s+1)$-edge-colouring $c'''$ of $K_n$ starting with $c''$ as follows: Recolour each edge with the colour $s+1$ with probability $\frac12$, and leave the colour from $c''$ with probability $\frac12$. Since the  colour $s+1$ is not used in $c$, the new colouring $c'''$ is $c$-free. Assume that $h(c'') <  h$. 
	Then under $c''$ every subset of  $V$ of size   $h$ contains an edge of each colour  in $[s]$. Using the properties of a random graph $G \in \mathcal G(n, \frac12)$, we see that each vertex set of size $2\log n$ and thus, each vertex set of size  $h$, induces an edge of colour  $s+1$ under $c'''$ with probability close to  $1$.
On the other hand,  since  $h(c'') <  h$, we know that in $c''$ each subset of $V$ of size  $h$ induces an edge of  colour $i$ for each $i \in [s]$.  Thus, using Tur\'an's theorem~\cite{Turan41}, a given subset $V$ of  size $h^{1+ \xi}$ induces  at least 
$x=\Omega\left(\binom{h^{1+\xi}}{2} \frac{1}{h}\right)=\Omega(h^{1+2\xi})$ edges of colour  $i$, for any $i\in [s]$, under the colouring $c''$. 
The probability that all these edges of  colour $i$ are recoloured with colour $s+1$ is at most $(1/2)^x$. Thus, the probability that some subset of $h^{1+\xi}$ vertices misses some colour from $[s]$  under $c'''$ is at most  
$$\binom{n}{h^{1+\xi}}s \left(\frac{1}{2}\right)^x   \leq 2^{h^{1+\xi} \log n - h^{1+2\xi}},$$
i.e., close to zero for large $n$. Therefore, with positive probability, all subsets of $h^{1+\xi}$ vertices induce edges of all colours under $c'''$ and so  $h(c''') <  h^{1+\xi}$, a contradiction to our assumption. \\

As a consequence of the two inequalities on $h_s(n,c)$ and $h_{s+1}(n, c)$, we have that  $c$ has the  EH-property for $s$ colours if and only if $c$ has the EH-property for $s+1$ colours.
		 \qedhere
\end{proof}

\vskip 1cm
%%%%%%%%%%%%%%%%%%%%%%%%%%%%%%%%%%%%%%%%%%%%%%
\begin{proof}[Proof of Proposition \ref{prop:monotonicity}]
%%%%%%%%%%%%%%%%%%%%%%%%%%%%%%%%%%%%%%%%%%%%%%

We shall first consider forbidding a rainbow triangle.
Let  $c'$ be an edge-colouring  of $K_3$ in which the edges have colours $1, 2$, and $3$. The structure of $c'$-free $3$-colourings of cliques is known and is called a \emph{Gallai colouring}~\cite{Gallai67, GS04}. It is known that $c'$ has the EH-property for $3$ colours,  see for example \cite{FGP15}, where it is shown that 
$ h_{3}(n, c) = \Theta\left(n^{1/3}\log^2 n\right).$ Next we shall show that $h_4(n,c') =O(n^{1/3} \log^2n)$.\\

We shall consider lexicographic products of  colourings.
If $c^*$ and $c^{**}$ are colourings, where $c^*$ colours a clique on vertex set $\{v_1,\ldots,v_k\}$ and $c^{**}$ colours a clique on $y$ vertices,  then the {\it lexicographic product } of $c^*$ and $c^{**}$, denoted $c^*\times c^{**}$ is a colouring of  a clique on vertex set $X_1\cup \cdots \cup X_k$, where $|X_i|=y$, $i=1, \ldots, k$, the edges induced by $X_i$ are coloured according to $c^{**}$, $i=1, \ldots, k$, and all edges between $X_i$ and $X_j$ are coloured with $c^*(v_iv_j)$, for $ 1\leq i<j\leq k$.  Here, we refer to the $X_i$'s as {\it blobs}.
Let for a colouring $c^*$ of a clique and a subset of colours $I$,   $S^{c^*}_I$  be the size of a largest clique using colours only from $I$.
Note that if $c^*$ and $c^{**}$ are colourings,   then 
$S^{c^*\times c^{**}}_I = S^{c^*}_IS^{c^{**}}_I.$\\

Let $c_i$, $i \in [3]$ be a $3$-edge-colouring of $K_{n^{1/3}}$ using colours from  $[4]-\{i\}$ and satisfying $h_3(c_i) =O(\log n)$, i.e. for some positive $C$ any clique on $C\log n$ vertices induces all three colours from $[4]-\{i\}$. Note that such colourings exist and could be chosen by randomly assigning one of the three colours to each edge uniformly. Also note that $c_i$ is $c'$-free for $i \in [3]$. \\
Let $c_4 = c_1 \times c_2 \times c_3$ be the lexicographic product  of $c_1, c_2$ and $c_3$, it is a $4$-edge-colouring of $K_n$.  This is a construction very similar to one used in~\cite{FGP15}. \\

To verify that $c_4$ is $c'$-free,  consider first $c_2\times c_3$.  Since  $c_2$ and $c_3$ are $c'$-free, we only need to check each triangle
	with two vertices in one blob and one vertex in a different blob of the blow-up of $c_2\times c_3$. Since the edges between two different  blobs all have the same colour, the triangle is not rainbow. Thus, $c_2\times c_3$ is $c'$-free. Similarly, we conclude that $c_4=c_1\times(c_2\times c_3)$ is $c'$-free.\\
	
Next, we shall bound the size of a largest homogenous set in $c_4$.  For a subset $\{i,j, k\} $ of $[4]$, we shall  simply write $ijk$. 
We have that $S_{I}^{c_j}= n^{1/3}$ if $I=[4]-\{j\}$ and $S_{I}^{c_j}= S_{I-\{j\}}^{c_j}=O(\log n)$, if $j\in I$. Then 
\begin{eqnarray*}
		S_{123}^{c_4} & = &S_{123}^{c_1}\cdot  S_{123}^{c_2}\cdot S_{123}^{c_3} = O(\log n)O(\log n)O(\log n),\\
		S_{124}^{c_4} & = &S_{124}^{c_1}\cdot S_{124}^{c_2}\cdot S_{124}^{c_3} =  O(\log n)O(\log n) n^{1/3},\\
		S_{134}^{c_4} & = &S_{134}^{c_1}\cdot S_{134}^{c_2}\cdot S_{134}^{c_3}=  O(\log n)n^{1/3}O(\log n), \quad \text{and}\\
		S_{234}^{c_4}  & = &S_{234}^{c_1}\cdot S_{234}^{c_2}\cdot  S_{234}^{c_3} =  n^{1/3} O(\log n)O(\log n).
	\end{eqnarray*}
Since $h_4(c_4) = \max \{S_{ijk}^{c_4}:  ~\{i, j,k\} \subseteq [4], | \{i, j, k\} |=3\}$, we have that $h_4(n, c') \leq h_4(c_4) =O(n^{1/3}\log^2 n)$.

~\\
Now, we shall consider forbidding induced $P_4$ in two colours.
Let $c$ be a $2$-edge-colouring of $K_4$ in which each  colour class induces $P_4$, a path on $4$ vertices. 
Note that $c$ having the EH-property for $2$ colours is equivalent to $P_4$ having the EH-property. Any $P_4$-free graph  $G$ is a co-graph (see for example~\cite{BD84, CLB81} for properties of co-graphs), which is in particular a perfect graph, i.e., $\omega(G) = \chi(G)$, where $\chi(G)$ is the chromatic number. 
As observed by Erd\H{o}s and Hajnal~\cite{EH89},  if $G$ has $n$ vertices,  $n\leq \alpha(G) \chi(G) = \alpha(G) \omega(G)$. Therefore $h(G) = \max\{\alpha(G), \omega(G)\} \geq n^{1/2}$.
In particular, we have $h_2(n, c) \geq  n^{1/2}$. It is also not difficult to show and is proven in \cite{EH89}, that  $h_2(n, c) \leq  n^{1/2}(1+o(1))$.\\

Next we shall construct a $c$-free colouring $c^*$ of $K_n$ on a vertex set $V$  using colours $1, 2, $ and $3$, for sufficiently large $n$.
By the lower bound on the Ramsey number $R(4,t) = \Omega(t^3/\log^4t)$ by Mattheus and Verstraete~\cite{MV},  there exists a graph $H$ on the vertex set $V$ such that  $\omega(H)< 4$ and  $ \alpha(H) < Cn^{1/3}\log^{4/3}n$, for some positive constant $C$. 
To define $c^*$, let the edges not in $H$ be coloured $3$, and each edge of $H$ be coloured $1$ with probability $1/2$ and $2$ with probability $1/2$.  Note that in this colouring each $K_4$ has an edge of colour $3$, and therefore there is no copy of $c$.
We shall argue that with positive probability $h_3(c^*) = O(n^{1/3} \log^{7/3} n).$
	Letting $q(n) = 8\alpha(H)\log(n)$, we shall show that any set of $q(n)$ vertices induces edges of all three colours under $c^*$. 
	Let $X$ be a fixed set of $q(n)$ vertices. By Tur\'an's theorem~\cite{Turan41},  the number of edges induced by $X$ in $H$ is at least 
	$$ e_X = \frac{1}{\alpha(H)} \binom{q(n)}{2} \ge \frac{q^2(n)}{4\alpha(H)}.$$
	If  $p_X$ is the probability  that $X$ induces only edges of colours $2$ and $3$ in $c^*$ or that $X$ induces only edges of colours $1$ and $3$ in $c^*$, then 
	$$p_X \leq 2\cdot 2^{-e_X} \leq 2\cdot 2^{-q^{2}(n)/4\alpha(H)}.$$
	Using the union bound over all $q(n)$-element subsets of $V$, we have that the probability  that  $c^*$ contains a $q(n)$-vertex set inducing edges of only two colours is at most
	$$
		\binom{n}{q(n)} p_X \leq n^{q(n)} 2^{1 - {q^2(n)}/{4\alpha(H)}} = 2^{8\alpha(H)\log^2 (n) + 1 - {16\alpha(H)\log^2(n)}}  < 1.
$$
	Thus, with positive probability there exists a desired colouring. 
\end{proof}
\noindent We remark that we did not attempt to optimise any of the constants involved.

%%%%%%%%%%%%%%%%%%%%%%%%%%%%%%%%%%%%%%%%%%%%%%
%%%%%%%%%%%%%%%%%%%%%%%%%%%%%%%%%%%%%%%%%%%%%%
\section{Concluding remarks}\label{sec:conclusion}
%%%%%%%%%%%%%%%%%%%%%%%%%%%%%%%%%%%%%%%%%%%%%%
%%%%%%%%%%%%%%%%%%%%%%%%%%%%%%%%%%%%%%%%%%%%%%

The multicolour Erd\H{o}s-Hajnal conjecture is concerned with the existence of large homogeneous sets in edge-coloured cliques that do not contain a copy of a given colouring of a small clique. It could be that the number of colours used in a large clique is strictly larger than the number of colours used in a forbidden clique-colouring. \\

We showed that the multicolour EH-conjecture could be reduced to the situation when the large clique uses the same set of colours as the forbidden colouring or maybe one more.  This brings us to the following special cases, in a sense smallest,  for which the EH-conjecture is known to  be true for the number of colours used in the forbidden colouring, but not any more once additional colours are allowed:

\begin{question}
	Does the $2$-edge-colouring of $K_4$ in which each colour class is isomorphic to $P_4$ have the EH-property for $3$ colours?
\end{question}

\begin{question}\label{problem:triangle}
	Does the rainbow triangle have the EH-property for $4$ colours? 
\end{question}

Note that Question \ref{problem:triangle} was formulated by Conlon, Fox, and R\"odl \cite{CFR} in a connection with a hypergraph Ramsey number $R_3(H; 3)$ for a specific $3$-uniform hypergraph $H$. Here $R_3(H; 3)$ is the smallest $n$ such that any colouring of triples from an $n$-element set using three colours results in a monochromatic copy of $H$.

Let $H_t$ be a $3$-uniform hypergraph on  a vertex set $[t]\cup \binom{[t]}{2}$ and edge set  $\{\{i, j, \{i,j\}\}: i, j \in [t]\}$.  It was shown in \cite{CFR}, that 
$$F(t)\leq R_3(H_t; 3) =O(t^4 F(t^3)^2),$$
where $F(t)$ is a dual function for $h_4(n, c')$,  so that $c'$ is an edge colouring of a triangle using three colours $1, 2, $ and $3$, i.e., 
$F(t)$ is the smallest $n$ such that any edge colouring of a clique on $n$ vertices using colours $1, 2, 3$, and $4$ contains either $c'$ or  a clique of size $t$ inducing at most $3$ colours. 
So, if Question \ref{problem:triangle} has a positive answer, it would imply a polynomial behaviour of  $R_3(H_t; 3)$, contrasting the exponential behaviour of  the $4$-colour Ramsey number $R_3(H_t; 4)$.\\

The EH-conjecture fails for $r$-graphs, $r\geq 3$, already when $F$  is a clique of size $r+1$.  Indeed,  well-known results on off-diagonal hypergraph Ramsey numbers show that there are $n$-vertex $r$-graphs that do not have a clique on $r+1$ vertices and do not have cocliques on $f_r(n)$ vertices, where $f_r$ is an iterated logarithmic function (see~\cite{MS18} for the best known results). 
 Moreover, a result (Claim 1.3. in Gishboliner and Tomon~\cite{GT23})   tells us  that for any $r\geq 3$, if $F$ is an $r$-graph on at least $r+1$ vertices, $F\neq D_2$, then there is an $F$-free  $r$-graph  $H$ on $n$ vertices such that $h(H) =(\log n)^{O(1)}$.
Here $D_2$ is a unique $3$-graph on four vertices and two edges.\\

While the  EH-conjecture fails for hypergraphs, we conjecture that its weaker size-version holds.
Here, instead of forbidding a specific  pattern, we forbid a  palette $T=(t_1, \ldots, t_{s'})$, where $t_i$'s add up to the number of edges in a $k$-vertex clique. We say that  a colouring $c$ {\it avoids the palette} $T$ if there exists no clique of size $k$ so that the number of edges of colour $i$ in that clique is exactly $t_i$, $i= 1, \ldots, s'$.\\  

\begin{conjecture} Let $k$, $s'$, and $s$ be integers, $s\geq s'$. Let $T=(t_1, \ldots, t_{s'})$ be a tuple of nonnegative integers adding up to $\binom{k}{2}$. Then there is a positive constant $\epsilon$ such that for any colouring $c$ of $K_n$ in colours from $[s]$  avoiding $T$, we have $h_s(c)\geq n^\epsilon$.
\end{conjecture}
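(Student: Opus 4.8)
The plan is to reduce the conjecture, in two steps, to the problem of forbidding a single colour pattern of $K_k$, where the substitution method becomes available, and then to isolate the part that is genuinely open.

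\emph{Reducing the number of colours.} First I would establish a palette version of \Cref{reduction}: if $T$ has $s'$ entries and $s>s'$, then $h_{s+1}(n,T)\ge h_s(n,T)$ and, for large $n$, $h_{s+1}(n,T)\le h_s^{1+\xi}(n,T)$. The proof copies that of \Cref{reduction} almost verbatim --- a $k$-clique realizing $T$ must be coloured entirely within $[s']$, so merging colour $s+1$ into another unused colour preserves $T$-avoidance, and inserting random colour-$(s+1)$ edges cannot create a realization of $T$. Hence it suffices to prove the conjecture for $s\in\{s',s'+1\}$.

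\emph{Reducing to one pattern.} By definition $c$ avoids the palette $T$ if and only if $c$ avoids \emph{every} $s'$-colouring of $K_k$ whose colour-frequency vector is $T$. So it is enough to exhibit one such colouring $c'_T$ that has the EH-property for $s$ colours: then a $T$-avoiding colouring $c$ of $K_n$ is in particular $c'_T$-free, and $h_s(c)\ge h_s(n,c'_T)\ge C n^{\epsilon(c'_T,s)}$. The freedom to choose $c'_T$ among all realizations of $T$ is what I would exploit: build $c'_T$ by modular (substitution) composition of small blocks and invoke the substitution method of Alon--Pach--Solymosi \cite{APS} (in its multicolour form), which says that a colouring obtained by substituting colourings with the EH-property into a colouring with the EH-property again has the EH-property. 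Single-edge colourings trivially have the EH-property --- forbidding one leaves a colour unused --- and iterated substitutions of these are exactly the monochromatic cliques, covering the frequency vectors supported on one coordinate. For two colours ($s'=2$) a greedy construction realizes \emph{any} vector $(t_1,\binom{k}{2}-t_1)$ by a colouring whose colour-$1$ graph is a $k$-vertex cograph: take the largest clique $K_a$ with $\binom{a}{2}\le t_1$, join one further vertex to $t_1-\binom{a}{2}$ of its vertices, and pad with $k-a-1$ isolated vertices. Since EH holds for $F$-free graphs whenever $F$ is a cograph \cite{APS}, and by the first step this extends to $s=3$, the programme goes through completely when $s'=2$.

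\emph{The main obstacle.} For $s'\ge 3$ the plan runs into frequency vectors that admit only \emph{prime} (indecomposable) realizations, which the substitution method cannot peel apart --- the extreme case being $k=3$, $T=(1,1,1)$, whose only realization is the rainbow triangle. What is left is therefore to prove the EH-property for the finitely many small prime colour patterns that arise as factors of the $c'_T$, and this is exactly where the conjecture meets open ground: for $s=4$ the vector $(1,1,1)$ is precisely Question~\ref{problem:triangle}, linked by \cite{CFR} to the hypergraph Ramsey number $R_3(H_t;3)$. (In the hypergraph formulation the difficulty is sharper still: even the single $(r+1)$-clique pattern lacks the EH-property, and off-diagonal hypergraph Ramsey constructions force homogeneous sets only of iterated-logarithmic size \cite{MS18, GT23}, so one would also have to show that forbidding an entire palette, rather than a single pattern, is enough to destroy those constructions.) I expect this last point --- converting a palette restriction into genuine structure on the prime factors --- to be the crux of the proof.
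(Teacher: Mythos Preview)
The statement is posed in the paper as a \emph{conjecture}; the paper gives no proof, only the remarks that the case $s'=2$ is known (quantitatively for $s=2$ in \cite{ABGMW}, and for $s>2$ via a multicolour Alon--Pach--Solymosi argument in \cite{W}) and that the general case is hard because the palette $(1,\ldots,1)$ collapses to the full EH-conjecture for the rainbow $K_{s'}$. Your proposal is, appropriately, not a proof but a strategy, and you flag its incompleteness yourself. The outline matches the paper's commentary: your reduction to $s\in\{s',s'+1\}$ is the palette analogue of \Cref{reduction}; your observation that it suffices to exhibit \emph{one} realization of $T$ with the EH-property is correct and is the mechanism behind the $s'=2$ result; and your identification of the rainbow triangle at $s=4$ as the first genuine obstruction is exactly Question~\ref{problem:triangle}.

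One technical slip: your step~1 only compares $h_{s+1}$ with $h_s$ when $s>s'$, so it does not lift the two-colour cograph argument from $s=2$ to $s=3$. For $s'=2$ and $s=3$ you actually need the multicolour substitution theorem directly (this is what the paper points to via \cite{W}), not the two-colour APS plus your reduction. Otherwise the plan is sound as far as it goes, and it goes exactly as far as the paper says the problem is currently understood.
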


When $s=s'=2$, a quantitative version of the above conjecture is proven in \cite{ABGMW}.
Moreover, one can show that the conjecture holds if $s'=2$ and $s>s'$ using a generalisation of a result by Alon, Pach, and Solymosi \cite{APS} to arbitrary number of  colours, see Weber \cite{W}.  In general, the conjecture still might be challenging as it coincides with the EH-conjecture when $(t_1, \ldots, t_{s'})= (1, \ldots, 1)$, i.e.,  in the case of the rainbow pattern.\\

\noindent
{\bf Acknowledgments}  The authors thank Jacob Fox for bringing their attention to \cite{CFR}.  The research of the first author was supported in part  by the DFG grant FKZ AX 93/2-1.

\bibliographystyle{plain}
\bibliography{bibliography} 
\end{document}